\numberwithin{equation}{section}
\date{}
\newtheorem*{theorem1}{Theorem A}
\newtheorem*{theorem2}{Theorem B}
\newtheorem*{theorem3}{Theorem C}
\newtheorem{thm}{Theorem}[section]
\newtheorem{prop}[thm]{Proposition}
\newtheorem{lem}[thm]{Lemma}
\begin{document}
\title{\bf A new characterization of Auslander algebras$^\star$}
\author{{\small Shen Li and  Shunhua Zhang}\\
         {\small School of Mathematics, Shandong University,
        Jinan, 250100,P.R.China}}
\date{}
\maketitle

\begin{center}\section*{}\end{center}

\begin{abstract}
Let $\Lambda$ be a finite dimensional Auslander algebra. For a $\Lambda$-module $M$, we prove that the projective dimension of $M$ is at most one if and only if the projective dimension of its socle soc\,$M$ is at most one. As an application, we give a new characterization of Auslander algebra $\Lambda$, and prove that a finite dimensional  algebra $\Lambda$ is an Auslander algebra provided its global dimension gl.d\,$\Lambda\leq2$ and an injective $\Lambda$-module is projective if and only if the projective dimension of its socle is at most one.
\end{abstract}

{\bf Key words and phrases:}\ Auslander algebra, projective dimension, global dimension.

\footnote {MSC(2000): 16E10, 16G10.}

\footnote{ $^\star$ Supported by the NSF of China (Grant No.11171183  and  11371165),  and also supported by PCSIRT ( IRT1264).}

\footnote{  Email addresses: \  fbljs603@163.com(S.Li), \ \    shzhang@sdu.edu.cn(S.Zhang).}

\vskip0.2in

\section{Introduction}
A finite dimensional algebra $\Lambda$ is called an Auslander algebra if its global dimension is at most two and dominant dimension is at least two, that is, in the minimal injective resolution $0\rightarrow \Lambda \rightarrow I_{0}\rightarrow I_{1}\rightarrow I_{2}\rightarrow 0 $ of $\Lambda$, both $I_{0}$ and $I_{1}$ are projective $\Lambda$-modules. It is introduced by Auslander when studying representation-finite algebras in [1] and can be constructed in the following way: let $R$ be a finite dimensional algebra of finite representation type and $M_{1},M_{2},\ldots,M_{n}$ be a complete set of representatives of the isomorphism classes of indecomposable $R$-modules, then $\Lambda=$End$_{R}(\oplus^{n}_{i=1}M_{i})$ is the Auslander algebra of $R$. Moreover, this construction induces a mutually inverse bijection between Morita equivalence classes of representations-finite algebras and Morita equivalence classes of Auslander algebras. This bijection is called the Auslander correspondence and the correspondence is generalized by Iyama to a higher dimensional version in [2]. It is also known that Auslander algebras have close relations with quasi-hereditary algebras, preprojective algebras and projective quotient algebras, see [3,4,5] for details.

Let $M$=$\oplus^{n}_{i=1}M_{i}$ be the additive generator of $R$ and $\Lambda$$=$End$_{R}M$ be the corresponding Auslander algebra. We denote by $S_{i}$ the simple top of the indecomposable projective $\Lambda$-module Hom$_{R}$($M,M_{i}$), then the projective dimension pd$_{\Lambda}\,S_{i}$ of $S_{i}$ is at most two since the global dimension gl.d\,$\Lambda$ of $\Lambda$ is at most two. According to [1,VI, Proposition 5.11], we know that pd$_{\Lambda}\,S_{i}$$\leq 1$ if $M_{i}$ is a projective $R$-module and otherwise pd$_{\Lambda}\,S_{i}$$=2$. Thus the projective dimensions of all simple $\Lambda$-modules are clear. However, for a non-simple $\Lambda$-module $M$, we don't know whether its projective dimension pd$_{\Lambda}\,M$ is one or two. In this paper, we show that the projective dimension pd$_{\Lambda}\,M$ of $M$ is determined by the projective dimension of its socle soc$M$. Since soc$M$ is the direct sum of the simple submodules of $M$, by the above discussion, we can easily calculate the projective dimension of soc$M$.

 Note that the socle of $M$ coincides with the socle of its injective envelope $I$($M$). Thus in order to investigate the relations between the projective dimensions of $M$ and soc\,$M$, we should study injective $\Lambda$-modules and their socles at first. Let $\mathcal{C}$($\Lambda$) be the full subcategory of mod-$\Lambda$ whose objects are all the projective-injective $\Lambda$-modules, then we prove the following result about injective $\Lambda$-modules.

\begin{theorem1}

Let $\Lambda$ be a finite dimensional Auslander algebra and $I$ be an indecomposable injective $\Lambda$-module. Then $I$ is projective if and only if the projective dimension of its socle is at most one, that is,  $\mathcal{C}(\Lambda)$$=\{ I \in mod$-$\Lambda\ |\  I\ is\ injective \ and \ pd_{\Lambda}\,soc\,I\leq 1\}$.

\end{theorem1}

Now we show the relations between the projective dimensions of $\Lambda$-modules and their socles. Let $\mathcal{P}^{1}$($\Lambda$) be the full subcategory of mod-$\Lambda$ whose objects are all the $\Lambda$-modules $M$ with pd$_{\Lambda}\,M\leq 1$,  we give the second result of this paper.

\begin{theorem2}

Let $\Lambda$ be a finite dimensional Auslander algebra and $M$ be a $\Lambda$-module. Then the projective dimension of $M$ is at most one if and only if the projective dimension of its socle soc\,$M$ is at most one, that is,  $\mathcal{P}^{1}(\Lambda)=\{M\in mod$-$\Lambda\ | \ pd_{\Lambda}\,soc\,M \leq 1\}$.

\end{theorem2}

Theorem B also implies that pd$_{\Lambda}\,M=2$ if and only if pd$_{\Lambda}\,{\rm soc}\,M=2$. Before we complete this paper, Eir\'{\i}ksson gives a characterization of $\mathcal{P}^{1}(\Lambda)$ in [7], which states that $\mathcal{P}^{1}(\Lambda)$ consists of all $\Lambda$-modules cogenerated by projective $\Lambda$-modules. We investigate $\mathcal{P}^{1}(\Lambda)$ from a different point of view and stress that the projective dimension of a $\Lambda$-module $M$ is completely determined by the projective dimension of its socle.

The above two theorems state the properties of Auslander algebras, then it is natural to ask whether Auslander algebras can be characterized by these properties, that is, whether a finite dimensional algebra of global dimension at most two is an Auslander algebra provided it satisfies the properties in Theorem A or Theorem B. We give a positive answer to this question as following.

\begin{theorem3}

Let $\Lambda$ be a finite dimensional algebra. Then $\Lambda$ is an Auslander algebra if and only if its global dimension gl.d\,$\Lambda\leq 2$ and $\mathcal{C}(\Lambda)$ $=\{ I \in mod$-$\Lambda\ |\  I \ is \ injective  \  and  \\  pd_{\Lambda} \, soc\, I  \leq 1 \}$.

\end{theorem3}

We also provide an example to show that a finite dimensional algebra $\Lambda$ with gl.d\,$\Lambda\leq 2$ and  $\mathcal{P}^{1}(\Lambda)=\{M\in {\rm mod}$-$\Lambda\ | \ {\rm pd_{\Lambda}\,soc}\,M \leq 1\}$ is not necessarily an Auslander algebra.

This paper is arranged as follows. In section 2, we fix the notions and recall some necessary facts needed for our research. In section 3, we prove Theorem A and B. Section 4 is devoted to the proof of Theorem C.

\section{Preliminaries}

Throughout this paper, let $k$ be an algebraically closed field and we consider basic finite dimensional $k$-algebras. For a finite dimensional $k$-algebra $A$, we denote by mod-$A$ the category of finitely generated right $A$-modules and by gl.d\,${A}$ the global dimension of $A$. For a right $A$-module $M$, pd$_{A}\,M$ (Id$_{A}\,M$) is the projective (injective) dimension of $M$ and soc\,$M$(rad\,$M$) is the socle (radical) of $M$. We denote by add\,$M$ the full subcategory of mod-$A$ whose objects are direct summands of finite direct sums of copies of $M$. $\tau_{A}$ is the Auslander-Reiten translation of $A$.

Let $R$ be a finite dimensional $k$-algebra of finite representation type and $M_{1},M_{2},\ldots,M_{n}$ be a complete set of representatives of the isomorphism classes of indecomposable $R$-modules. Then $M$=$\oplus^{n}_{i=1}M_{i}$ is the additive generator of $R$ and $\Lambda$$=$End$_{R}M$ is the corresponding Auslander algebra. For a $R$-module $X$, we denote by $P_{X}$$=$Hom$_{R}$($M,X$) the corresponding projective $\Lambda$-module. Now we recall some basic properties of Auslander algebras.

\begin{prop} {\rm [6,Proposition 2.3]} Let $S_{i}$ $(i=1,2,\ldots,n)$ be the simple top of the indecomposable projective $\Lambda$-module $P_{M_{i}}$$=$Hom$_{R}(M,M_{i})$. Then we have

\par $(1)$ pd$_{\Lambda}\,S_{i}\leq 1$ if and only if $M_{i}$ is projective. Then $0 \rightarrow P_{radM_{i}}\rightarrow P_{M_{i}}\rightarrow S_{i}\rightarrow 0 $ is a minimal projective resolution of $S_{i}$.

\par $(2)$ pd$_{\Lambda}\,S_{i}= 2 $ if and only if $M_{i}$ is non-projective. Then the almost split sequence $0 \rightarrow \tau M_{i}\rightarrow E \rightarrow M_{i}\rightarrow 0$ gives a minimal projective resolution $0 \rightarrow P_{\tau M_{i}} \rightarrow P_{E} \rightarrow P_{M_{i}} \rightarrow S_{i} \rightarrow 0$ of $S_{i}$.

\end{prop}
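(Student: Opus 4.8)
The plan is to compute a minimal projective resolution of $S_i$ directly, using the functor $F=\mathrm{Hom}_R(M,-)$ together with Auslander--Reiten theory in mod-$R$. Recall that $F$ restricts to an equivalence between mod-$R$ ($=$ add\,$M$, since $M$ is an additive generator) and the category of projective $\Lambda$-modules, sending an $R$-module $X$ to $P_X$, and that $F$ is left exact. I would first record the standard identification $\mathrm{rad}\,P_{M_i}=\mathrm{rad}_R(M,M_i)$, the subspace of radical (non-split-epi) morphisms $M\to M_i$, so that the projective cover $P_{M_i}\to S_i$ has kernel exactly these radical morphisms.

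The central observation is the factorization property of the sink map. Let $g\colon E'\to M_i$ be the minimal right almost split morphism into $M_i$. By its defining property every radical morphism $M\to M_i$ factors through $g$, while conversely every morphism factoring through the radical map $g$ is itself radical; hence $\mathrm{im}\,F(g)=\mathrm{rad}_R(M,M_i)=\mathrm{rad}\,P_{M_i}$. Thus $F(g)\colon P_{E'}\to P_{M_i}$ realizes the first syzygy of $S_i$, and the whole computation reduces to describing $g$ in each case and applying the left exact functor $F$ to the relevant short exact sequences in mod-$R$.

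I would then split into the two cases. If $M_i$ is projective, $g$ is the inclusion $\mathrm{rad}\,M_i\hookrightarrow M_i$, which is a monomorphism; left exactness of $F$ gives the exact sequence $0\to P_{\mathrm{rad}\,M_i}\to P_{M_i}\to S_i\to 0$, so $\mathrm{pd}_\Lambda\,S_i\leq 1$. If $M_i$ is non-projective, $g$ is the epimorphism $E\to M_i$ coming from the almost split sequence $0\to \tau M_i\to E\to M_i\to 0$; applying the left exact $F$ to this sequence and splicing with the identification $\mathrm{rad}\,P_{M_i}=\mathrm{im}\,F(g)$ yields $0\to P_{\tau M_i}\to P_E\to P_{M_i}\to S_i\to 0$. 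Since all differentials are radical maps this resolution is minimal, and as $P_{\tau M_i}\neq 0$ we read off $\mathrm{pd}_\Lambda\,S_i=2$. The two cases together establish both equivalences. The main obstacle I anticipate is the careful verification of exactness of the four-term sequence in the non-projective case and of its minimality: exactness at $P_E$ and at $P_{M_i}$ requires combining left exactness of $F$ with the factorization property of $g$, and minimality requires checking that each differential is a radical morphism of $\Lambda$-modules, so that the projective dimension is correctly read as $2$ rather than something smaller.
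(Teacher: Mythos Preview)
Your proposal is correct and follows the standard argument for this well-known fact. Note, however, that the paper itself does not supply a proof of this proposition: it is stated as a citation of \textrm{[6, Proposition 2.3]} (Iyama--Zhang) and used as a preliminary, so there is no proof in the paper to compare against. Your argument via the minimal right almost split morphism together with left exactness of $F=\mathrm{Hom}_R(M,-)$ is exactly the classical route; the identification $\mathrm{rad}\,P_{M_i}=\mathrm{rad}_R(M,M_i)=\mathrm{im}\,F(g)$ is the key point, and your handling of exactness (from left exactness of $F$) and of minimality (differentials are images of radical maps under an additive equivalence, hence radical) is correct. The only cosmetic remark is that in the projective case one should allow $\mathrm{rad}\,M_i=0$ when $M_i$ is simple projective, in which case $S_i$ is itself projective; your sequence covers this degenerately.
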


The above proposition shows the relationship between almost split sequences in mod-$R$ and projective resolutions of simple $\Lambda$-modules.

\begin{lem}{\rm [6,Lemma 2.4]} Let $R$ and $\Lambda$ be as above and $M_{i}$ be a non-projective $R$-module. Then we have Ext\,$^{2}_{\Lambda}(S_{i},\Lambda)\neq 0$ and Ext\,$^{j}_{\Lambda}(S_{i},\Lambda)= 0$  if $j\neq2$.

\end{lem}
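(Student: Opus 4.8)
The plan is to apply the contravariant functor $\mathrm{Hom}_\Lambda(-,\Lambda)$ to the minimal projective resolution of $S_i$ provided by Proposition 2.1, and then to transport the resulting cochain complex, via the equivalence $\mathrm{Hom}_R(M,-)$, into a complex living in $\mathrm{mod}\,R$ that is simply $\mathrm{Hom}_R(-,M)$ applied to the almost split sequence ending at $M_i$. Concretely, since $M_i$ is non-projective, Proposition 2.1 gives the minimal projective resolution $0\to P_{\tau M_i}\xrightarrow{\,Fa\,} P_E \xrightarrow{\,Fb\,} P_{M_i}\to S_i\to 0$ induced by the almost split sequence $0\to\tau M_i\xrightarrow{\,a\,}E\xrightarrow{\,b\,}M_i\to 0$, where $F=\mathrm{Hom}_R(M,-)$. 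In particular $\mathrm{pd}_\Lambda S_i=2$, so $\mathrm{Ext}^j_\Lambda(S_i,\Lambda)=0$ for all $j\ge 3$, and for $j=0,1,2$ the groups $\mathrm{Ext}^j_\Lambda(S_i,\Lambda)$ are the cohomology of $0\to \mathrm{Hom}_\Lambda(P_{M_i},\Lambda)\to \mathrm{Hom}_\Lambda(P_E,\Lambda)\to \mathrm{Hom}_\Lambda(P_{\tau M_i},\Lambda)\to 0$.

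Next I would use that $M$ is the additive generator of $R$, so $F$ is fully faithful with $F X = P_X$; combined with $\Lambda_\Lambda=\bigoplus_{j}P_{M_j}$ this produces a natural isomorphism $\mathrm{Hom}_\Lambda(P_X,\Lambda)\cong\bigoplus_j\mathrm{Hom}_\Lambda(P_X,P_{M_j})\cong\bigoplus_j\mathrm{Hom}_R(X,M_j)=\mathrm{Hom}_R(X,M)$, natural in $X$. Under this identification the cochain complex above becomes exactly $0\to\mathrm{Hom}_R(M_i,M)\xrightarrow{\,b^*\,}\mathrm{Hom}_R(E,M)\xrightarrow{\,a^*\,}\mathrm{Hom}_R(\tau M_i,M)\to 0$, i.e.\ the contravariant functor $\mathrm{Hom}_R(-,M)$ evaluated on the almost split sequence. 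The vanishing in degrees $0$ and $1$ is then immediate from left exactness of $\mathrm{Hom}_R(-,M)$: applied to $0\to\tau M_i\to E\to M_i\to 0$ it keeps $0\to\mathrm{Hom}_R(M_i,M)\to\mathrm{Hom}_R(E,M)\to\mathrm{Hom}_R(\tau M_i,M)$ exact, so $b^*$ is injective and $\ker a^*=\mathrm{im}\,b^*$, giving $\mathrm{Ext}^0_\Lambda(S_i,\Lambda)=\mathrm{Ext}^1_\Lambda(S_i,\Lambda)=0$.

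It remains to prove $\mathrm{Ext}^2_\Lambda(S_i,\Lambda)=\mathrm{coker}\,a^*\neq 0$, equivalently that $a^*$ is not surjective, and this is the crux of the argument. Since $M_i$ is indecomposable and non-projective, $\tau M_i$ is indecomposable, hence isomorphic to one of the $M_j$ and a direct summand of $M$, so there is a split monomorphism $\iota:\tau M_i\hookrightarrow M$ with a retraction $p:M\to\tau M_i$, $p\iota=\mathrm{id}_{\tau M_i}$. I claim $\iota$ is not in the image of $a^*$: if $\iota=g\circ a$ for some $g\in\mathrm{Hom}_R(E,M)$, then $\mathrm{id}_{\tau M_i}=p\iota=(pg)\,a$, so $a$ would be a split monomorphism, contradicting the fact that the almost split sequence does not split. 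Hence $a^*$ fails to be surjective and $\mathrm{Ext}^2_\Lambda(S_i,\Lambda)\neq 0$, which finishes the proof. The main obstacle is exactly this last non-surjectivity step, where the non-split nature of the almost split sequence is essential; everything else is formal once the cochain complex has been moved to $\mathrm{mod}\,R$ through the equivalence $\mathrm{Hom}_R(M,-)$.
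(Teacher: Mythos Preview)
The paper does not give its own proof of this lemma; it is quoted verbatim from reference~[6] (Iyama--Zhang) and used as a black box. So there is nothing to compare against on the paper's side.

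Your argument is correct and is the natural proof. Applying $\mathrm{Hom}_\Lambda(-,\Lambda)$ to the projective resolution from Proposition~2.1 and transporting through the full faithfulness of $\mathrm{Hom}_R(M,-)$ on $\mathrm{add}\,M$ identifies the relevant cochain complex with $\mathrm{Hom}_R(-,M)$ applied to the almost split sequence; left exactness then disposes of degrees $0$ and $1$, and your non-surjectivity argument in degree $2$---that the inclusion $\tau M_i\hookrightarrow M$ cannot factor through $a$ without forcing the almost split sequence to split---is exactly right. The only cosmetic remark is that the vanishing for $j\ge 3$ could equally be phrased as automatic from $\mathrm{gl.d}\,\Lambda\le 2$, but invoking $\mathrm{pd}_\Lambda S_i=2$ as you do is equivalent.
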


If $j=0$, we get Hom$_{\Lambda}$($S_{i}, \Lambda$)=0 and this implies that the socle of $\Lambda$ is the direct sum of simple $\Lambda$-modules whose projective dimensions are at most one. Now we give a general result about algebras of global dimension two.

\begin{lem}

Let $A$ be a finite dimensional $k$-algebra with gl.d\,$A=2$. Then we have pd$_{A}\,soc\,A\leq 1$.

\end{lem}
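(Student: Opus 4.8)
The plan is to realize ${\rm soc}\,A$ as a submodule of the free module $A_{A}$ and then run a one-step dimension shift. I would begin with the short exact sequence of right $A$-modules
\[
0\longrightarrow {\rm soc}\,A \longrightarrow A \longrightarrow A/{\rm soc}\,A \longrightarrow 0,
\]
apply ${\rm Hom}_{A}(-,N)$ for an arbitrary $N\in{\rm mod}$-$A$, and read off the long exact cohomology sequence. The relevant piece is
\[
{\rm Ext}^{2}_{A}(A,N)\longrightarrow {\rm Ext}^{2}_{A}({\rm soc}\,A,N)\longrightarrow {\rm Ext}^{3}_{A}(A/{\rm soc}\,A,N).
\]
Both outer terms vanish: the left-hand one because $A_{A}$ is projective, and the right-hand one because ${\rm gl.d}\,A=2$ forces ${\rm Ext}^{3}_{A}(-,-)=0$. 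Hence ${\rm Ext}^{2}_{A}({\rm soc}\,A,N)=0$ for every $N$, and since ${\rm gl.d}\,A=2$ already gives ${\rm pd}_{A}\,{\rm soc}\,A\leq 2$, the standard characterization of projective dimension yields ${\rm pd}_{A}\,{\rm soc}\,A\leq 1$.

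If one prefers to avoid testing ${\rm Ext}$ against all modules, the same sequence can be handled via syzygies: taking a projective cover $P_{0}\twoheadrightarrow A/{\rm soc}\,A$, Schanuel's lemma identifies $\Omega(A/{\rm soc}\,A)$ with ${\rm soc}\,A$ up to projective direct summands, so ${\rm pd}_{A}\,{\rm soc}\,A={\rm pd}_{A}\,\Omega(A/{\rm soc}\,A)\leq 1$ because ${\rm pd}_{A}\,A/{\rm soc}\,A\leq 2$ (the bound being only smaller if $A/{\rm soc}\,A$ is projective, in which case the sequence splits and ${\rm soc}\,A$ is projective).

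There is no real obstacle here: the argument is a routine dimension count, and in fact it shows more generally that any submodule of a projective $A$-module has projective dimension at most one whenever ${\rm gl.d}\,A\leq 2$ — we only need the instance ${\rm soc}\,A\subseteq A_{A}$. The only place asking for a little care is the bookkeeping in the syzygy version (that ${\rm soc}\,A$ and $\Omega(A/{\rm soc}\,A)$ coincide modulo projectives, and that passing to a syzygy drops projective dimension by exactly one off the projective locus), which is precisely why I would present the proof in the ${\rm Ext}$-sequence form above.
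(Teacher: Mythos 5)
Your argument is correct and is essentially the paper's proof: both hinge on the short exact sequence $0\rightarrow {\rm soc}\,A\rightarrow A\rightarrow A/{\rm soc}\,A\rightarrow 0$, the projectivity of $A_{A}$, and ${\rm gl.d}\,A=2$ to shift the bound ${\rm pd}_{A}(A/{\rm soc}\,A)\leq 2$ down by one. The paper merely phrases the shift with a case split (if $A/{\rm soc}\,A$ is projective the sequence splits; otherwise ${\rm pd}_{A}\,{\rm soc}\,A={\rm pd}_{A}(A/{\rm soc}\,A)-1\leq 1$), whereas you make it explicit via the ${\rm Ext}$ long exact sequence, respectively Schanuel's lemma.
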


\begin{proof}
Obviously we can get a short exact sequence $0\rightarrow $soc$ \,A\rightarrow A \rightarrow A/$soc$A \rightarrow 0$. If $A/$soc$A$ is a projective $A$-module, then this sequence splits and soc\,$A$ is also projective. Otherwise pd$_{A}\,$soc$\,A$=pd$_{A}(A/$soc$A )-1\leq 1$ since $A$ is projective and  ${\rm gl.d}\ A=2$.

\end{proof}

We also need the following lemma in this paper.

\begin{lem}

{\rm [1,VI, Lemma 5.5]} Let $A$ be a finite dimensional $k$-algebra and gl.d\,$A=n$. Then we have Id$_{A}\,A=n$.

\end{lem}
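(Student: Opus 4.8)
Since the global dimension of $A$ dominates the injective dimension of every $A$-module, the inequality Id$_A\,A\le{\rm gl.d}\,A=n$ is automatic, and the whole point is to prove Id$_A\,A\ge n$, that is, to produce a right $A$-module $M$ with Ext$^{n}_{A}(M,A)\neq 0$. If $n=0$ then $A$ is semisimple and hence self-injective, so we may assume $n\ge 1$. As $A$ is finite dimensional, gl.d\,$A$ is the maximum of the projective dimensions of the finitely many simple right $A$-modules, so we can fix a simple right $A$-module $S$ with pd$_A\,S=n$.

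The core of the argument is the claim that Ext$^{n}_{A}(S,A)\neq 0$. Take a minimal projective resolution $0\to P_n\xrightarrow{d_n}P_{n-1}\to\cdots\to P_0\to S\to 0$; minimality means $d_i(P_i)\subseteq{\rm rad}\,P_{i-1}$ for every $i\ge 1$, and $P_n\neq 0$ precisely because pd$_A\,S=n$. Applying Hom$_A(-,A)$ and taking the $n$-th cohomology of the resulting complex gives Ext$^{n}_{A}(S,A)\cong{\rm Hom}_A(P_n,A)/{\rm im}\,d_n^{*}$, where $d_n^{*}(\varphi)=\varphi d_n$. Since $d_n(P_n)\subseteq{\rm rad}\,P_{n-1}=P_{n-1}\cdot{\rm rad}\,A$, every $\varphi d_n$ carries $P_n$ into rad\,$A$, so ${\rm im}\,d_n^{*}\subseteq{\rm Hom}_A(P_n,{\rm rad}\,A)$. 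Because $P_n$ is projective, Hom$_A(P_n,-)$ is exact, whence ${\rm Hom}_A(P_n,A)/{\rm Hom}_A(P_n,{\rm rad}\,A)\cong{\rm Hom}_A(P_n,A/{\rm rad}\,A)$, and this is nonzero since the nonzero projective $P_n$ has a nonzero semisimple top and every simple right $A$-module occurs in $A/{\rm rad}\,A$. Hence Ext$^{n}_{A}(S,A)$ maps onto a nonzero module and is itself nonzero, so Id$_A\,A\ge n$, and therefore Id$_A\,A=n$.

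The only subtle point is this core step, and within it the observation that minimality of the resolution keeps ${\rm im}\,d_n^{*}$ inside ${\rm Hom}_A(P_n,{\rm rad}\,A)$, so that the top of the nonzero module $P_n$ is not annihilated after dualizing into $A$; everything else is bookkeeping. A computation-free alternative uses the standard duality $D={\rm Hom}_k(-,k)$: one has Id$_A\,A={\rm pd}_{A^{\rm op}}D(A)$ and $D(A)$ is an injective cogenerator of $A$-mod, so it suffices to check that the projective dimension of an injective cogenerator equals gl.d\,$A^{\rm op}={\rm gl.d}\,A=n$; this follows by dimension shifting along a minimal injective coresolution, legitimate because gl.d\,$A<\infty$ forces every injective dimension to be finite, together with the fact that an injective cogenerator has every indecomposable injective as a direct summand. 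I would use the first argument as the main line and mention the second as a remark.
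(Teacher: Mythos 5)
Your proof is correct. Note that the paper does not prove this statement at all: it is quoted as a known result, namely [1, VI, Lemma 5.5] of Auslander--Reiten--Smal\o, so there is no in-paper argument to compare against. Your main line is essentially the standard proof of that lemma: the inequality ${\rm Id}_A\,A\le {\rm gl.d}\,A$ is automatic, and for a simple $S$ of maximal projective dimension $n$ the minimality of the projective resolution forces ${\rm im}\,d_n^{*}\subseteq{\rm Hom}_A(P_n,{\rm rad}\,A)$, so ${\rm Ext}^n_A(S,A)$ surjects onto ${\rm Hom}_A(P_n,A/{\rm rad}\,A)\neq 0$; every step checks out (including ${\rm rad}\,P_{n-1}=P_{n-1}\cdot{\rm rad}\,A$ and the use of exactness of ${\rm Hom}_A(P_n,-)$). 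A marginally shorter variant of the same step: if ${\rm Ext}^n_A(S,A)=0$ then $d_n^{*}$ is surjective, so a split embedding of $P_n$ into a free module factors through $d_n$, making $d_n$ a split monomorphism and contradicting minimality. Your remark via the duality $D$ is also sound, since for a module of maximal projective dimension $n$ its injective envelope must again have projective dimension $n$ (by the standard comparison of projective dimensions in a short exact sequence, as ${\rm pd}$ of the cokernel is at most $n$), which gives ${\rm pd}_{A^{\rm op}}D(A)=n$ and hence ${\rm Id}_A\,A=n$.
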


The following proposition shows the relations between the projective dimensions of the three modules in a short exact sequence, which is very useful.

\begin{prop}

{\rm [8,Appendix, Proposition 4.7]} Let $A$ be a finite dimensional $k$-algebra and $0 \rightarrow L \rightarrow M \rightarrow N \rightarrow 0$ be a short exact sequence of $A$-modules. Then we have
\par $(1)$ pd$_{A}\,N\leq max(pd_{A}\,M, 1+pd_{A}\,L)$, and the equality holds if $pd_{A}\,M\neq pd_{A}\,L$.
\par $(2)$ pd$_{A}\,L\leq max(pd_{A}\,M,-1+pd_{A}\,N)$, and the equality holds if $pd_{A}\,M\neq pd_{A}\,N$.
\par $(3)$ pd$_{A}\,M\leq max(pd_{A}\,L, pd_{A}\,N)$, and the equality holds if $pd_{A}\,N\neq 1+pd_{A}\,L$.

\end{prop}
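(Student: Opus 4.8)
The plan is to deduce all three statements from the single observation that, for any finitely generated $A$-module $X$, one has $\mathrm{pd}_{A}\,X = \sup\{\,i : \mathrm{Ext}^{i}_{A}(X,Y)\neq 0 \text{ for some } Y\,\}$, with the convention $\mathrm{pd}_{A}\,0=-\infty$; equivalently, $\mathrm{pd}_{A}\,X\le d$ holds exactly when $\mathrm{Ext}^{i}_{A}(X,Y)=0$ for all $i>d$ and all $Y$. Writing $l=\mathrm{pd}_{A}\,L$, $m=\mathrm{pd}_{A}\,M$, $n=\mathrm{pd}_{A}\,N$, I would fix an arbitrary module $Y$ and apply $\mathrm{Hom}_{A}(-,Y)$ to the given short exact sequence to obtain the long exact sequence
\[ \cdots\to\mathrm{Ext}^{i-1}_{A}(L,Y)\to\mathrm{Ext}^{i}_{A}(N,Y)\to\mathrm{Ext}^{i}_{A}(M,Y)\to\mathrm{Ext}^{i}_{A}(L,Y)\to\mathrm{Ext}^{i+1}_{A}(N,Y)\to\cdots, \]
which is the only tool needed. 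The degenerate cases where one of $L,M,N$ vanishes follow at once from the conventions, so I will assume all three are nonzero.

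For the three inequalities I would argue uniformly by a vanishing count. For (3), if $i>\max(l,n)$ then both $\mathrm{Ext}^{i}_{A}(N,Y)$ and $\mathrm{Ext}^{i}_{A}(L,Y)$ vanish, so the middle term $\mathrm{Ext}^{i}_{A}(M,Y)$ is squeezed to $0$; as $Y$ was arbitrary this gives $m\le\max(l,n)$. For (1), if $i>\max(m,l+1)$ then $\mathrm{Ext}^{i}_{A}(M,Y)=0$ and $\mathrm{Ext}^{i-1}_{A}(L,Y)=0$, forcing $\mathrm{Ext}^{i}_{A}(N,Y)=0$, whence $n\le\max(m,l+1)$. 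For (2), if $i>\max(m,n-1)$ then $\mathrm{Ext}^{i}_{A}(M,Y)=0$ and $\mathrm{Ext}^{i+1}_{A}(N,Y)=0$, forcing $\mathrm{Ext}^{i}_{A}(L,Y)=0$, whence $l\le\max(m,n-1)$.

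The equalities require producing, in each case, a single witness module $Y$ that realizes the dimension of the \emph{dominant} module and then reading off nonvanishing of a neighbour in the long exact sequence. For (1) under $m\neq l$: if $m>l$, pick $Y$ with $\mathrm{Ext}^{m}_{A}(M,Y)\neq0$; since $m>l$ the term $\mathrm{Ext}^{m}_{A}(L,Y)$ vanishes, so the map $\mathrm{Ext}^{m}_{A}(N,Y)\to\mathrm{Ext}^{m}_{A}(M,Y)$ is onto and $n\ge m$; if $m<l$, pick $Y$ with $\mathrm{Ext}^{l}_{A}(L,Y)\neq0$ and use that the two neighbouring $M$-terms vanish to get $\mathrm{Ext}^{l}_{A}(L,Y)\cong\mathrm{Ext}^{l+1}_{A}(N,Y)$, so $n\ge l+1$. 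Completely analogous choices handle (2) under $m\neq n$ (splitting into $m>n$ and $m<n$) and the cases $n>l$ and $n<l$ of (3), where the hypothesis $n\neq l+1$ in the case $n>l$ upgrades to $n\ge l+2$ and thereby kills the relevant $L$-neighbour.

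The one genuinely delicate point, and the step I expect to be the main obstacle, is the remaining subcase of (3) where $n=l$ (which automatically satisfies the hypothesis $n\neq l+1$ but in which no neighbour is forced to vanish directly). Here I would argue by contradiction: if $m<l=n$, then $\mathrm{Ext}^{l}_{A}(M,Y)=0$ for every $Y$, while $\mathrm{Ext}^{l+1}_{A}(N,Y)=0$ because $l+1>n$; the segment $\mathrm{Ext}^{l}_{A}(M,Y)\to\mathrm{Ext}^{l}_{A}(L,Y)\to\mathrm{Ext}^{l+1}_{A}(N,Y)$ then forces $\mathrm{Ext}^{l}_{A}(L,Y)=0$ for all $Y$, contradicting $\mathrm{pd}_{A}\,L=l$. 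This also clarifies why the excluded value $n=l+1$ is sharp: when $M$ is the projective cover of $N$ one has $L=\Omega N$ and $l=n-1$, so $m$ can drop below $\max(l,n)$ precisely on that boundary. Assembling the inequalities with these witnesses yields the three equalities.
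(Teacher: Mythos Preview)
Your argument via the long exact sequence in $\mathrm{Ext}^{\bullet}_{A}(-,Y)$ is correct and is the standard way to establish this classical result; the case split for the equality in (3), including the delicate subcase $n=l$, is handled cleanly. Note, however, that the paper itself does not prove this proposition at all: it is merely quoted from \cite{ASS} (Assem--Simson--Skowro\'nski, Appendix, Proposition~4.7) and used as a black box, so there is no in-paper proof to compare against. Your write-up therefore supplies strictly more than the paper does here.
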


Throughout this paper, we follow the standard terminologies and notations used in the representation theory of algebras, see [1,8].

\section{Projective dimensions of modules over Auslander algebras}

Let $\Lambda$ be a finite dimensional Auslander $k$-algebra. In this section, we investigate the $\Lambda$-modules with projective dimension at most one and show that they are determined by the projective dimension of their socles.

By Lemma 2.3, we know that the socle of $\Lambda$ is the direct sum of simple $\Lambda$-modules with projective dimension at most one. Furthermore, we prove that all simple $\Lambda$-modules with projective dimension at most one are contained in the socle of $\Lambda$.

\begin{prop}

Let $\Lambda$ be a finite dimensional Auslander $k$-algebra and $S_{i}$ be a simple $\Lambda$-module. Then we have pd$_{\Lambda}\,S_{i}\leq 1$ if and only if $S_{i}\in add\,soc\,\Lambda$.

\end{prop}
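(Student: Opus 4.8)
The plan is to prove both implications directly using the structural facts about Auslander algebras collected in Section~2, especially Proposition~2.1 and Lemma~2.2.

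\medskip

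\noindent\textbf{The ``if'' direction.} Suppose $S_i \in \operatorname{add}\operatorname{soc}\Lambda$, i.e.\ $S_i$ is a direct summand of $\operatorname{soc}\Lambda$. By Lemma~2.3 (which follows from Lemma~2.2 applied with $j=0$), the socle of $\Lambda$ is a direct sum of simple modules of projective dimension at most one, so in particular $\operatorname{pd}_\Lambda S_i \le 1$. Alternatively, and giving a little more, one can invoke Lemma~2.3: since $\operatorname{gl.d}\Lambda \le 2$, we have $\operatorname{pd}_\Lambda\operatorname{soc}\Lambda \le 1$, and a direct summand of a module of projective dimension $\le 1$ again has projective dimension $\le 1$. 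Either way this direction is immediate.

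\medskip

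\noindent\textbf{The ``only if'' direction.} This is the substantive half. Assume $\operatorname{pd}_\Lambda S_i \le 1$; we must show $S_i$ embeds into $\Lambda$ as a direct summand of the socle. By Proposition~2.1(1), $\operatorname{pd}_\Lambda S_i \le 1$ forces $M_i$ to be a projective $R$-module, and we have the minimal projective resolution $0 \to P_{\operatorname{rad}M_i} \to P_{M_i} \to S_i \to 0$. The strategy is to show $\operatorname{Hom}_\Lambda(S_i,\Lambda) \neq 0$, equivalently that $S_i$ appears in $\operatorname{soc}\Lambda$; since $S_i$ is simple, any nonzero map $S_i \to \Lambda$ is injective with image a simple submodule, hence $S_i \in \operatorname{add}\operatorname{soc}\Lambda$. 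To produce such a map I would contrast the two cases of Lemma~2.2: for a non-projective $M_j$ we have $\operatorname{Ext}^2_\Lambda(S_j,\Lambda)\neq 0$ and all other $\operatorname{Ext}^j_\Lambda(S_j,\Lambda)=0$; in particular $\operatorname{Hom}_\Lambda(S_j,\Lambda)=0$ exactly for those $j$. So what is needed is the complementary statement: when $M_i$ is projective, $\operatorname{Hom}_\Lambda(S_i,\Lambda)\neq 0$. I would prove this by a counting/dimension argument: the number of simple summands of $\operatorname{soc}\Lambda$ (with multiplicity) equals the number of indecomposable injective-projective summands, or alternatively use that $\Lambda$ has a duality-type symmetry; more concretely, apply $\operatorname{Hom}_\Lambda(-,\Lambda)$ to the minimal projective resolution of $S_i$ and compute, using that $\operatorname{pd}_\Lambda S_i\le 1$ and $\operatorname{Id}_\Lambda\Lambda = \operatorname{gl.d}\Lambda = 2$ (Lemma~2.4). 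The cleanest route: since the simple $\Lambda$-modules are $S_1,\dots,S_n$ and for each non-projective $M_j$ the simple $S_j$ is \emph{not} in $\operatorname{soc}\Lambda$ (as $\operatorname{Hom}_\Lambda(S_j,\Lambda)=0$), while $\operatorname{soc}\Lambda$ is a direct sum of \emph{some} of the $S_i$'s with $\operatorname{pd}_\Lambda S_i\le 1$ by Lemma~2.3, it remains to rule out that some projective $M_i$ nonetheless has $S_i \notin \operatorname{soc}\Lambda$. For this I would use that the dominant dimension of $\Lambda$ is at least two: in the minimal injective copresentation $0 \to \Lambda \to I_0 \to I_1$ with $I_0, I_1$ projective-injective, the socle of $\Lambda$ equals the socle of $I_0$, and $I_0 = I(\operatorname{soc}\Lambda)$ decomposes according to the simple socle summands; matching indecomposable projective-injectives with the projective $R$-modules $M_i$ then shows every projective $M_i$ contributes an $S_i$ to $\operatorname{soc}\Lambda$.

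\medskip

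\noindent The main obstacle I anticipate is precisely this last point: establishing a bijection (or at least the relevant surjection) between the simple summands of $\operatorname{soc}\Lambda$ and the indices $i$ with $M_i$ projective. Lemma~2.2 handles the non-projective $M_j$ (they are excluded from the socle), and Lemma~2.3 tells us everything in the socle has $\operatorname{pd}\le 1$; the gap is to show \emph{all} such $S_i$ actually occur. I expect this to follow from the projective-injective structure: the indecomposable projective-injective $\Lambda$-modules are exactly the $P_{M_i} = \operatorname{Hom}_R(M, M_i)$ with $M_i$ injective... but one needs projectivity of $M_i$, not injectivity, so the correct statement to chase is that $P_{M_i}$ has simple socle $S_i$ when $M_i$ is projective, which can be verified using $\operatorname{Hom}_\Lambda(S_i, P_{M_i})\cong \operatorname{Hom}_R(M_i/\operatorname{rad}M_i, M_i)$-type adjunction identities together with the minimal projective resolution from Proposition~2.1(1). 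Once that identity is in hand, the ``only if'' direction closes.
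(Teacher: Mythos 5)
Your ``if'' direction is fine and is exactly the paper's. The ``only if'' direction, however, is not yet a proof: it ends with an admitted gap, and the specific identity you propose to close it with --- that $P_{M_i}=\operatorname{Hom}_R(M,M_i)$ has simple socle $S_i$ whenever $M_i$ is projective --- is false in general. For the Auslander algebra of the path algebra of $A_2$ ($1\to 2$), take $M_i$ to be the projective--injective $R$-module $P_1$: then $P_{M_i}$ is two-dimensional with socle the simple $\Lambda$-module attached to the simple projective $P_2$, not $S_i$. Indeed, for projective $M_j$ one has $\operatorname{Hom}_\Lambda(S_j,\operatorname{Hom}_R(M,X))\cong\operatorname{Hom}_R(\operatorname{top}M_j,X)$, so $S_i$ need not occur in the socle of its own projective cover; it occurs in the socle of $P_{\operatorname{top}M_i}$. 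Your alternative suggestions (a counting argument matching socle summands with projective--injective summands, a duality-type symmetry, the use of $\operatorname{Id}_\Lambda\Lambda=2$ or of dominant dimension) are not carried out, and the counting statement by itself would not tell you \emph{which} simples occur in $\operatorname{soc}\Lambda$.

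The missing idea --- which the paper supplies, and which also completes the route you half-start (``apply $\operatorname{Hom}_\Lambda(-,\Lambda)$ to the minimal projective resolution and compute'') --- is an explicit nonzero element. Applying $\operatorname{Hom}_\Lambda(-,\Lambda)$ to $0\to P_{\operatorname{rad}M_i}\to P_{M_i}\to S_i\to 0$ and using that $\operatorname{Hom}_R(M,-)$ is an equivalence from $\operatorname{add}M$ to $\operatorname{add}\Lambda$ identifies $\operatorname{Hom}_\Lambda(S_i,\Lambda)$ with the kernel of the restriction map $g^*\colon\operatorname{Hom}_R(M_i,M)\to\operatorname{Hom}_R(\operatorname{rad}M_i,M)$. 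Since $\operatorname{top}M_i$ is an indecomposable $R$-module, it is a direct summand of the additive generator $M$, and the composite $M_i\twoheadrightarrow\operatorname{top}M_i\hookrightarrow M$ is a nonzero element of $\ker g^*$ because it vanishes on $\operatorname{rad}M_i$; hence $\operatorname{Hom}_\Lambda(S_i,\Lambda)\neq 0$ and, $S_i$ being simple, $S_i\in\operatorname{add}\operatorname{soc}\Lambda$. Note that the only special feature of the Auslander algebra used here is that $M$ contains every indecomposable $R$-module (in particular the simple module $\operatorname{top}M_i$) as a direct summand; no appeal to dominant dimension or to injective dimensions is needed.
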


\begin{proof}

If $S_{i}\in $add$\,$soc$\,\Lambda$, then pd$_{\Lambda}\,S_{i}\leq 1$ since pd$_{\Lambda}$soc$\,\Lambda \leq 1$ by Lemma 2.2.

Now assume pd$_{\Lambda}\,S_{i}\leq 1$, we only need to show Hom$_{\Lambda}(S_{i},\Lambda)\neq 0$. According to Proposition 2.1, there exists an indecomposable projective $R$-module $M_{i}$ such that  $0 \rightarrow $Hom$_{R}(M,{\rm rad}M_{i})\rightarrow $Hom$_{R}(M,M_{i})\rightarrow S_{i}\rightarrow 0 $ is a minimal projective resolution of $S_{i}$. Applying Hom$_{\Lambda}(-,\Lambda)$ to the above short exact sequence, we get an exact sequence
$$0 \rightarrow {\rm Hom}_{\Lambda}(S_{i}, \Lambda)\rightarrow {\rm Hom}_{\Lambda}({\rm Hom}_{R}(M,M_{i}),\Lambda)\rightarrow {\rm Hom}_{\Lambda}({\rm Hom}_{R}(M,{\rm rad}M_{i}),\Lambda)$$  Note that $\Lambda$$=$End$_{R}M$ and Hom$_{R}(M,-)$ induces an equivalence between add\,$M$ and add\,$\Lambda$, then we get the following exact sequence $$0 \rightarrow {\rm Hom}_{\Lambda}(S_{i},\Lambda)\rightarrow {\rm Hom}_{R}(M_{i},M) \xrightarrow{g^{*}} {\rm Hom}_{R}({\rm rad}M_{i},M)$$

Let $\varphi$ be the epimorphism from $M_{i}$ to top\,$M_{i}$ and we decompose $M={\rm top}\,M_{i}\oplus M^{'}$, then $(\varphi, 0)^{t}\in$ Hom$_{R}(M_{i},M)$. Obviously $g^{*}((\varphi, 0)^{t})=g^{*}(0)=0$ and $g^{*}$ is not injective. This implies that Hom$_{\Lambda}(S_{i},\Lambda)\neq 0$.

\end{proof}

It is known that the socle of a $\Lambda$-module $M$ coincides with the socle of its injective envelope $I(M)$, so we investigate the relationship between injective $\Lambda$-modules and their socles at first.

\begin{thm}

Let $\Lambda$ be a finite dimensional Auslander $k$-algebra and $I$ be an indecomposable injective $\Lambda$-module. Then $I$ is projective if and only if the projective dimension of its socle is at most one, that is,  $\mathcal{C}(\Lambda)$$=\{ I \in mod$-$\Lambda\ |\  I\ is\ injective \ and \ pd_{\Lambda}\,soc\,I\leq 1\}$.

\end{thm}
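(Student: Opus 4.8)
The plan is to prove both implications using the structure of the minimal projective resolution of $\Lambda$ together with the duality/equivalence $\mathrm{Hom}_R(M,-)\colon \mathrm{add}\,M \to \mathrm{add}\,\Lambda$. For the ``only if'' direction, suppose $I$ is indecomposable projective-injective. Since $\mathrm{gl.d}\,\Lambda \le 2$, Lemma 2.2 gives $\mathrm{pd}_\Lambda\,\mathrm{soc}\,\Lambda \le 1$, and because $I$ is projective, $I \in \mathrm{add}\,\Lambda$, so $\mathrm{soc}\,I \in \mathrm{add}\,\mathrm{soc}\,\Lambda$; hence $\mathrm{pd}_\Lambda\,\mathrm{soc}\,I \le 1$. (Equivalently, one can invoke Proposition 3.1: every simple summand of $\mathrm{soc}\,I$ lies in $\mathrm{add}\,\mathrm{soc}\,\Lambda$ and so has projective dimension at most one.)

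For the ``if'' direction, assume $I$ is indecomposable injective with $\mathrm{pd}_\Lambda\,\mathrm{soc}\,I \le 1$. Since $I$ is indecomposable, $\mathrm{soc}\,I$ is a simple module $S_i$, and by Proposition 3.1 we get $S_i \in \mathrm{add}\,\mathrm{soc}\,\Lambda$, i.e. $S_i$ is a direct summand of $\mathrm{soc}\,\Lambda$. Now $\mathrm{soc}\,\Lambda$ is an essential submodule of $\Lambda$, so $I = I(S_i)$ is a direct summand of the injective envelope $I(\Lambda)$. By Lemma 2.4, $\mathrm{Id}_\Lambda\,\Lambda = 2$, and the minimal injective resolution of $\Lambda$ has the form $0 \to \Lambda \to I_0 \to I_1 \to I_2 \to 0$; moreover $I_0 = I(\Lambda)$ is projective-injective (this is the defining dominant-dimension property one is trying to recover, so here I would instead argue directly). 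The cleaner route: consider the exact sequence $0 \to \Lambda \to I(\Lambda) \to \Omega^{-1}\Lambda \to 0$. Apply this together with the fact that $I(\Lambda) = \bigoplus I(S_j)$ where the $S_j$ range over the simple summands of $\mathrm{soc}\,\Lambda$, each of projective dimension $\le 1$. I then want to show each such $I(S_j)$ is projective.

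The key step — and the main obstacle — is showing that an indecomposable injective $I$ whose simple socle $S_i$ has $\mathrm{pd}_\Lambda\,S_i \le 1$ must itself be projective. Here I would use Proposition 2.1(1): $\mathrm{pd}_\Lambda\,S_i \le 1$ forces $M_i$ to be a projective $R$-module, so $M_i = P(j)$ for some indecomposable projective $R$-module. The Auslander algebra $\Lambda = \mathrm{End}_R(M)$ has a well-known description of its projective-injective modules: $\mathcal{C}(\Lambda) = \mathrm{add}\,\mathrm{Hom}_R(M, DR)$ where $DR = \bigoplus D(e_j R)$ corresponds to the projective left $R$-modules, equivalently $\mathcal{C}(\Lambda)$ is the image under $\mathrm{Hom}_R(M,-)$ of the additive closure of the projective $R$-modules. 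Thus $P_{M_i} = \mathrm{Hom}_R(M, M_i)$ with $M_i$ projective is exactly a projective-injective $\Lambda$-module, hence injective; since it is indecomposable with simple socle, and $I$ has the same simple socle $S_i$, we conclude $I \cong P_{M_i}$ by uniqueness of injective envelopes, so $I$ is projective. I would need to justify carefully that $P_{M_i}$ is injective when $M_i$ is $R$-projective — this is the heart of the matter and follows from the standard fact that $\mathrm{Hom}_R(M, M_i) \cong D\,\mathrm{Hom}_R(M_i, M)$-type identities, or more directly from $\mathrm{Hom}_R(M,-)$ sending $R$-projectives to the trace of $DR$; alternatively cite that the projective-injective $\Lambda$-modules correspond under $\mathrm{Hom}_R(M,-)$ precisely to the projective $R$-modules (Auslander, [1]). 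With that identification in hand, matching socles finishes the proof.
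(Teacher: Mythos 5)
Your ``only if'' direction is exactly the paper's argument and is fine. In the ``if'' direction, the first half of what you wrote --- $\mathrm{soc}\,I=S_i$, Proposition 3.1 gives $S_i\in\mathrm{add}\,\mathrm{soc}\,\Lambda$, hence $I=I(S_i)$ is a direct summand of $I(\Lambda)$, which is projective because $\Lambda$ has dominant dimension at least two --- is already a complete proof, and it is precisely the paper's proof. Your worry about circularity is misplaced: in Theorem A the algebra $\Lambda$ is \emph{assumed} to be an Auslander algebra, so dominant dimension $\geq 2$ is a hypothesis you are entitled to use; the ``recovery'' of the dominant dimension from the socle condition is a separate statement (Theorem C) with its own independent proof. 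By backing away from this step you discarded the correct argument.

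The substitute ``direct'' argument you committed to instead contains a genuine error. The projective-injective modules over $\Lambda=\mathrm{End}_R(M)$ are $\mathrm{add}\,\mathrm{Hom}_R(M,DR)$ with $DR$ the \emph{injective} cogenerator of $\mathrm{mod}$-$R$; they correspond under $\mathrm{Hom}_R(M,-)$ to the injective $R$-modules, not to the projective ones, and these two classes coincide only when $R$ is self-injective. So the claim that $M_i$ projective over $R$ forces $P_{M_i}=\mathrm{Hom}_R(M,M_i)$ to be injective is false: for $R=kQ$ with $Q\colon 1\rightarrow 2$ and $M_i=P_2=S_2$ (projective, not injective), $\mathrm{Hom}_R(M,P_2)$ is the one-dimensional simple projective $\Lambda$-module, whose injective envelope is the strictly larger module $\mathrm{Hom}_R(M,P_1)$; it is not injective. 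Moreover, even for the genuine projective-injectives the socle matching you invoke is not what you state: the socle of $\mathrm{Hom}_R(M,I(S))$ is the simple $\Lambda$-module attached to the projective cover of $S$, not the one attached to $I(S)$ (in the example above, $\mathrm{soc}\,\mathrm{Hom}_R(M,P_1)$ is the simple at the vertex $P_2$, not at $P_1$), so the identification $I\cong P_{M_i}$ would need a different and more careful argument. The fix is simply to keep the route you abandoned: $I\mid I(\Lambda)$ and $I(\Lambda)$ projective by the dominant dimension hypothesis.
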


\begin{proof}

If $I$ is a projective $\Lambda$-module, then soc\,$I\in $add$\,{\rm soc}\,\Lambda$ and by Proposition 3.1, pd$_{\Lambda}\,{\rm soc}\,I\leq 1$.

Conversely, assume pd$_{\Lambda}\,{\rm soc}\,I\leq 1$. Again by Proposition 3.1, we have soc$\,I\in $add$\,{\rm soc}\,\Lambda$. This implies that $I$ is a direct summand of the injective envelope $I(\Lambda)$ of $\Lambda$. Since the dominant dimension of $\Lambda$ is at least two, $I(\Lambda)$ is a projective $\Lambda$-module. Thus
$I$ is also projective.

\end{proof}

Now we are in a position to prove the main result of this section.

\begin{thm}

Let $\Lambda$ be a finite dimensional Auslander $k$-algebra  and $M$ be a $\Lambda$-module. Then the projective dimension of $M$ is at most one if and only if the projective dimension of its socle soc\,$M$ is at most one, that is,  $\mathcal{P}^{1}(\Lambda)=\{M\in mod$-$\Lambda\ | \ pd_{\Lambda}\,soc\,M \leq 1\}$.

\end{thm}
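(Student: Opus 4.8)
The plan is to prove both implications separately, leveraging Theorem 3.2 (Theorem A) as the key input that connects injective modules to the projective dimension of their socles.

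For the forward direction, suppose $\mathrm{pd}_\Lambda\, M \leq 1$. I would consider the injective envelope $0 \to M \to I(M) \to C \to 0$. Since $\mathrm{soc}\, M = \mathrm{soc}\, I(M)$, it suffices to show $\mathrm{pd}_\Lambda\,\mathrm{soc}\, I(M) \leq 1$, and by Theorem A this amounts to showing each indecomposable summand of $I(M)$ is projective. The natural approach is induction on the length of $M$. If $M$ is simple this is Proposition 3.1 together with Theorem A. For the inductive step, pick a simple submodule $S \subseteq \mathrm{soc}\, M$; since $S \hookrightarrow M$ and $\mathrm{pd}_\Lambda\, M \leq 1$, one gets $\mathrm{pd}_\Lambda\, S \leq \max(\mathrm{pd}_\Lambda\, M, -1 + \mathrm{pd}_\Lambda(M/S)) \leq \max(1, 1) = 1$ by Proposition 2.4(2), provided $\mathrm{pd}_\Lambda(M/S) \leq 2$ — which always holds since $\mathrm{gl.d}\,\Lambda \leq 2$. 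So every simple submodule of $M$ has projective dimension at most one, i.e. $\mathrm{soc}\, M \in \mathrm{add}\,\mathrm{soc}\,\Lambda$ by Proposition 3.1, hence $\mathrm{pd}_\Lambda\,\mathrm{soc}\, M \leq 1$ by Lemma 2.2. (In fact this shows the forward direction needs no induction at all.)

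For the converse, suppose $\mathrm{pd}_\Lambda\,\mathrm{soc}\, M \leq 1$. By Proposition 3.1, every simple summand of $\mathrm{soc}\, M$ lies in $\mathrm{add}\,\mathrm{soc}\,\Lambda$, so $\mathrm{soc}\, M = \mathrm{soc}\, I(M) \in \mathrm{add}\,\mathrm{soc}\,\Lambda$, which forces $I(M)$ to be a direct summand of $I(\Lambda)^{(t)}$ for some $t$; since $\Lambda$ has dominant dimension at least two, $I(\Lambda)$ is projective, hence $I(M)$ is projective. Now use the injective envelope sequence $0 \to M \to I(M) \to C \to 0$ and argue by induction on the length of $M$ (equivalently on the length of $C$). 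If $C = 0$ then $M = I(M)$ is projective. Otherwise, $\mathrm{soc}\, C$ is a sum of simple modules, and I would like to lift a simple submodule of $C$ to get a module $M'$ with $M \subsetneq M' \subseteq I(M)$, $M'/M$ simple, and $\mathrm{soc}\, M' = \mathrm{soc}\, M$ (the last because $\mathrm{soc}$ is preserved under the inclusion $M \subseteq M' \subseteq I(M)$, all having the same injective envelope). By induction $\mathrm{pd}_\Lambda\, M' \leq 1$, and then from $0 \to M \to M' \to S \to 0$ with $S$ simple, Proposition 2.4(2) gives $\mathrm{pd}_\Lambda\, M \leq \max(\mathrm{pd}_\Lambda\, M', -1 + \mathrm{pd}_\Lambda\, S)$; since $\mathrm{pd}_\Lambda\, S \leq 2$ this yields $\mathrm{pd}_\Lambda\, M \leq 1$. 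Alternatively, one can run the dual induction downward from $I(M)$: write $0 \to M \to I(M) \to C \to 0$ and induct using that $\mathrm{pd}_\Lambda\, I(M) = 0$ and $\mathrm{pd}_\Lambda\, C \leq 2$, pushing through Proposition 2.4(2) after peeling off one simple at a time from the socle filtration of $C$.

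The main obstacle I anticipate is the bookkeeping in the converse: one must be careful that the intermediate modules $M'$ between $M$ and $I(M)$ genuinely have the same socle as $M$ (so that the induction hypothesis applies), and that the socle filtration of $C = I(M)/M$ can be lifted compatibly. This is where I would spend the most care — making sure the inductive parameter strictly decreases and that at each stage the relevant module still satisfies "socle of projective dimension at most one." Everything else is a direct application of Proposition 2.4, Proposition 3.1, Lemma 2.2, and Theorem A, together with the two standing facts that $\mathrm{gl.d}\,\Lambda \leq 2$ (so all projective dimensions of quotients are bounded by $2$) and that $\mathrm{soc}\, M = \mathrm{soc}\, I(M)$.
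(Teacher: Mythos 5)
Your proposal is correct, but it diverges from the paper in an interesting way in the forward direction. The paper proves ``$\mathrm{pd}_\Lambda M\leq 1 \Rightarrow \mathrm{pd}_\Lambda\,\mathrm{soc}\,M\leq 1$'' by a pullback of a projective presentation $0\to P_1\to P_0\to M\to 0$ along a hypothetical socle summand $S$ with $\mathrm{pd}_\Lambda S=2$, and then uses the Ext-vanishing Lemma 2.2 (a genuinely Auslander-algebra fact) to split the pulled-back sequence and derive a contradiction with $\mathrm{pd}_\Lambda\,\mathrm{soc}\,P_0\leq 1$. You instead apply Proposition 2.5(2) (which you cite as 2.4(2)) to $0\to S\to M\to M/S\to 0$, getting $\mathrm{pd}_\Lambda S\leq\max(\mathrm{pd}_\Lambda M,\,\mathrm{pd}_\Lambda(M/S)-1)\leq 1$ because $\mathrm{gl.d}\,\Lambda\leq 2$; this is valid, shorter, and in fact shows the forward implication holds over \emph{any} algebra of global dimension at most two, with no Auslander hypothesis needed -- consistent with the paper's own counterexample in Section 4, where this direction still holds. (Your closing detour through Proposition 3.1 and ``Lemma 2.2'' is unnecessary: once each simple summand of $\mathrm{soc}\,M$ has projective dimension at most one, so does their direct sum.) For the converse you take the same key step as the paper -- $\mathrm{soc}\,M=\mathrm{soc}\,I(M)$ has projective dimension at most one, so $I(M)$ is projective by Theorem 3.2 -- but then you run an induction peeling off simple subquotients of $I(M)/M$, which, while correct (your intermediate modules $M'$ do satisfy $\mathrm{soc}\,M'=\mathrm{soc}\,M$ and the colength strictly drops), is superfluous: one application of Proposition 2.5(2) to $0\to M\to I(M)\to I(M)/M\to 0$ gives $\mathrm{pd}_\Lambda M\leq\max(0,\,\mathrm{pd}_\Lambda(I(M)/M)-1)\leq 1$ directly, which is exactly the paper's finish. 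So your argument buys a more elementary and more general forward direction at the cost of an avoidably elaborate converse.
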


\begin{proof}

Assume pd$_{\Lambda}\,M\leq 1$, then there exists a short exact sequence $0 \rightarrow P_{1} \rightarrow P_{0} \rightarrow M \rightarrow 0$ where $P_{0}$ and $P_{1}$ are projective $\Lambda$-modules. If pd$_{\Lambda}{\rm soc}\,M = 2$, there exists a simple $\Lambda$-module $S\in\, $add$\,{\rm soc}\,M$ such that pd$_{\Lambda}\,S=2$. Let $\varphi:S\rightarrow M$ be the inclusion, then we have the following commutative diagram with exact rows.

\[ \xymatrix{
0\ar[r] &P_{1}\ar@{=}[d] \ar[r] & X \ar[d]^h \ar[r] & S \ar[d]^\varphi \ar[r] & 0 \\
0\ar[r] &P_{1}           \ar[r] & P_{0}      \ar[r] & M                \ar[r] & 0  }
\]

Since $\varphi$ is an injection, by Snake lemma, $h$ is also injective. According to Lemma 2.2, we get Ext$^{1}_{\Lambda}(S,P_{1})=0$. So the higher short exact sequence splits and $X$$=P_{1}\oplus S$. It follows that $S$ is contained in the socle of $P_{0}$, which contradicts the fact that pd$_{\Lambda}\, {\rm soc}\,P_{0}\leq 1$.

Conversely, assume pd$_{\Lambda}\,{\rm soc}\,M\leq1$. Let $I(M)$ be the injective envelope of $M$, then we get a short  exact sequence $0 \rightarrow M \rightarrow I(M) \rightarrow I(M)/M \rightarrow 0$. Note that soc\,$M=$soc\,$I(M)$, we get pd$_{\Lambda}\,$soc$\,I(M)\leq1$. By Theorem 3.2, $I(M)$ is projective. If $I(M)/M$ is a projective $\Lambda$-module, $M$ is also projective. Otherwise, pd$_{\Lambda}\,M=$pd$_{\Lambda}\,I(M)/M-1\leq 1$.

\end{proof}

The above theorem also implies that for a $\Lambda$-module $M$, pd$_{\Lambda}\,M=2$ if and only if pd$_{\Lambda}\,{\rm soc}\,M=2$. This shows that the projective dimension of $M$ is completely determined by the projective dimension of its socle. By Proposition 2.1, the projective dimensions of all simple $\Lambda$-modules are clear, then we can get the projective dimension of any module over the Auslander algebra $\Lambda$.
\section{Characterizations of Auslander algebras}

Auslander algebras are characterized by the properties that global dimension is at most two and dominant dimension is at least two. In [5], Crawley-Boevey and Sauter give a new characterization of the Auslander algebra $\Lambda$, that is gl.d\,$\Lambda\leq 2$ and there exists a tilting $\Lambda$-module which is generated and cogenerated by projective-injective $\Lambda$-modules. In this section, we also give a new characterization of Auslander algebras.

Theorem 3.2 and Theorem 3.3 state two properties of the Auslander algebra $\Lambda$, that is  $\mathcal{C}(\Lambda)$$=\{ I \in { \rm mod}$-$\Lambda\ |\  I\ {\rm is\ injective \ and \ pd}_{\Lambda}\,{\rm soc}\,I\leq 1\}$ and $\mathcal{P}^{1}(\Lambda)=\{M\in { \rm mod}$-$\Lambda\ | \ {\rm pd}_{\Lambda}\,{\rm soc}\,M \leq 1\}$. Then it is natural to consider whether a finite dimensional algebra of global dimension at most two is an Auslander algebra provided it satisfies these properties. We prove the following result.

\begin{thm}

Let $\Lambda$ be a finite dimensional $k$-algebra. Then $\Lambda$ is an Auslander algebra if and only if its global dimension gl.d\,$\Lambda\leq 2$ and $\mathcal{C}(\Lambda)$$=\{ I \in mod$-$\Lambda\ |\  I\ is\ injective \ and \\ pd_{\Lambda}\,soc\,I \leq 1\}$.

\end{thm}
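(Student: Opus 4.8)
The ``only if'' direction requires nothing new: if $\Lambda$ is an Auslander algebra, then gl.d\,$\Lambda\leq 2$ holds by definition, and the equality $\mathcal{C}(\Lambda)=\{I\in{\rm mod}$-$\Lambda\mid I$ injective and ${\rm pd}_{\Lambda}\,{\rm soc}\,I\leq 1\}$ is exactly Theorem 3.2; although Theorem 3.2 is phrased for indecomposable injectives, the general statement follows immediately because ${\rm soc}$ commutes with finite direct sums and ${\rm pd}_{\Lambda}$ of a direct sum is the maximum of the summands. So all the content lies in the ``if'' direction, and that is what I would spend the proof on.

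Assume gl.d\,$\Lambda\leq 2$ and that the displayed equality holds; the goal is to show that the dominant dimension of $\Lambda$ is at least two. By Lemma 2.4 we have Id$_{\Lambda}\,\Lambda\leq 2$, so the minimal injective resolution of $\Lambda$ may be written $0\to\Lambda\to I_{0}\to I_{1}\to I_{2}\to 0$, and it suffices to prove that $I_{0}$ and $I_{1}$ are projective. For each of $j=0,1$ the plan is the same: show that ${\rm pd}_{\Lambda}\,{\rm soc}\,I_{j}\leq 1$, and then feed this into the displayed equality to conclude that $I_{j}\in\mathcal{C}(\Lambda)$, i.e. that $I_{j}$ is projective.

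The first step is easy. Since $\Lambda\hookrightarrow I_{0}=I(\Lambda)$ is an essential monomorphism it preserves socles, so ${\rm soc}\,I_{0}={\rm soc}\,\Lambda$, and Lemma 2.2 gives ${\rm pd}_{\Lambda}\,{\rm soc}\,\Lambda\leq 1$ (this is trivial if gl.d\,$\Lambda\leq 1$, since then every module has projective dimension at most one). Hence $I_{0}$ is projective. For the second step, set $C=I_{0}/\Lambda$. The short exact sequence $0\to\Lambda\to I_{0}\to C\to 0$ has both outer terms projective, hence is a projective resolution of $C$ of length at most one, so ${\rm pd}_{\Lambda}\,C\leq 1$. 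By minimality of the injective resolution, $C\hookrightarrow I_{1}$ is an injective envelope, so again ${\rm soc}\,I_{1}={\rm soc}\,C$. Now apply Proposition 2.5(2) to $0\to{\rm soc}\,C\to C\to C/{\rm soc}\,C\to 0$: using ${\rm pd}_{\Lambda}\,C\leq 1$ and ${\rm pd}_{\Lambda}\,(C/{\rm soc}\,C)\leq{\rm gl.d}\,\Lambda\leq 2$, it yields ${\rm pd}_{\Lambda}\,{\rm soc}\,C\leq\max(1,-1+2)=1$. Therefore $I_{1}$ is projective, and $\Lambda$ is an Auslander algebra.

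I expect the one genuinely delicate point to be the bound ${\rm pd}_{\Lambda}\,{\rm soc}\,I_{1}\leq 1$. It rests on combining three ingredients that have to be lined up correctly: the socle identifications ${\rm soc}\,I_{0}={\rm soc}\,\Lambda$ and ${\rm soc}\,I_{1}={\rm soc}(I_{0}/\Lambda)$, which come from minimality of the injective resolution; the fact that once $I_{0}$ is \emph{already known to be projective} the cokernel $I_{0}/\Lambda$ has projective dimension at most one; and Proposition 2.5(2), which then bounds the projective dimension of the socle of $I_{0}/\Lambda$ by $1$ rather than merely by $2={\rm gl.d}\,\Lambda$. Everything else is bookkeeping with essential monomorphisms and short exact sequences.
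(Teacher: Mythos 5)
Your proof is correct, and while the skeleton matches the paper's (only--if is Theorem 3.2; for the converse show ${\rm pd}_{\Lambda}\,{\rm soc}\,I_{0}\leq 1$ so $I_{0}$ is projective, then show the socle of the cokernel has projective dimension at most one so $I_{1}$ is projective), your treatment of the crux step is genuinely different and simpler. The paper proves ${\rm pd}_{\Lambda}\,{\rm soc}(I_{0}/\Lambda)\leq 1$ by contradiction: assuming it equals $2$, it forms the pullback $E$ of $0\to\Lambda\to I_{0}\to I_{0}/\Lambda\to 0$ along ${\rm soc}(I_{0}/\Lambda)\hookrightarrow I_{0}/\Lambda$, deduces ${\rm pd}_{\Lambda}\,E=2$ from Proposition 2.5(3), and then gets a module of projective dimension $3$ from $0\to E\to I_{0}\to Y\to 0$, contradicting ${\rm gl.d}\,\Lambda=2$. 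You instead observe directly that $C=I_{0}/\Lambda$ has ${\rm pd}_{\Lambda}\,C\leq 1$ because both $\Lambda$ and the already-established projective $I_{0}$ are projective, and then a single application of Proposition 2.5(2) to $0\to{\rm soc}\,C\to C\to C/{\rm soc}\,C\to 0$ gives ${\rm pd}_{\Lambda}\,{\rm soc}\,C\leq\max(1,\,{\rm gl.d}\,\Lambda-1)=1$. This avoids both the pullback diagram and the contradiction, and your uniform argument also dispenses with the paper's separate case ${\rm gl.d}\,\Lambda\leq 1$ (which the paper handles by showing $\Lambda$ is semisimple), since your bounds never require ${\rm gl.d}\,\Lambda$ to equal $2$ exactly. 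Two cosmetic points: the bound ${\rm pd}_{\Lambda}\,{\rm soc}\,\Lambda\leq 1$ is the paper's Lemma 2.3, not Lemma 2.2; and in the degenerate situations $C=0$ or $C={\rm soc}\,C$ the conclusion holds trivially, which is worth a half-sentence.
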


\begin{proof}
If $\Lambda$ is an Auslander algebra, then gl.d\,$\Lambda\leq 2$ and by Theorem 3.2, we have $\mathcal{C}(\Lambda)$$=\{ I \in { \rm mod}$-$\Lambda\ |\  I\ {\rm is\ injective \ and \ pd}_{\Lambda}\,{\rm soc}\,I\leq 1\}$.

Now assume $\Lambda$ is a finite dimensional $k$-algebra with gl.d\,$\Lambda\leq 2$ and  $\mathcal{C}(\Lambda)$$=\{ I \in { \rm mod}$-$\Lambda\ |\  I\ {\rm is\ injective \ and \ pd}_{\Lambda}\,{ \rm soc}\,I\leq 1\}$. We only need to show that the dominant dimension of $\Lambda$ is at least two.

If gl.d\,$\Lambda\leq 1$, then all injective $\Lambda$-modules are projective and $\Lambda$ is a self-injective algebra. For a $\Lambda$-module $M$, there exists a short exact sequence $0 \rightarrow P_{1} \rightarrow P_{0} \rightarrow M \rightarrow 0$ where $P_{0}$ and $P_{1}$ are projective $\Lambda$-modules. Since $P_{1}$ is also injective, this short exact sequence splits and $M$ is projective. It follows that $\Lambda$ is a semi-simple algebra. Thus $\Lambda$ is an Auslander algebra.

If gl.d\,$\Lambda=2$, by Lemma 2.4, we have Id\,$_{\Lambda}\,\Lambda=2$. Let $0 \rightarrow \Lambda \xrightarrow{f} I_{0} \rightarrow I_{1} \rightarrow I_{2} \rightarrow 0 $ be a minimal injective resolution of $\Lambda$. Since $I_{0}$ is the injective envelope of $\Lambda$, we have soc\,$I_{0}=$soc\,$\Lambda$. According to Lemma 2.3, pd$_{\Lambda}\,{\rm soc}\,I_{0}=$pd$_{\Lambda}\,{\rm soc}\,\Lambda\leq 1$, then $I_{0}$ is projective. Consider the short exact sequence $0 \rightarrow \Lambda \xrightarrow{f} I_{0} \rightarrow {\rm Im}\,f \rightarrow 0 $. We claims that pd$_{\Lambda}\, {\rm soc}\, {\rm Im}\,f\leq 1$.

Otherwise,  if pd$_{\Lambda}\, {\rm soc}\,{\rm Im}\,f=2$,  let $g: {\rm soc}\,{\rm Im}\,f \rightarrow {\rm Im}\,f$ be the inclusion. Then we get the following commutative diagram with exact rows.

\[ \xymatrix{
0\ar[r] &\Lambda \ar@{=}[d] \ar[r] & E \ar[d]^h \ar[r] & {\rm soc}\,{\rm Im}\,f \ar[d]^g \ar[r] & 0 \\
0\ar[r] &\Lambda            \ar[r] & I_{0}      \ar[r] &      {\rm Im}\,f          \ar[r] & 0  }
\]

Since $g$ is injective, by Snake lemma, $h$ is also an injection. Note that pd$_{\Lambda}\, {\rm soc}\,{\rm Im}\,f=2 \neq 1+$pd$_{\Lambda}\,\Lambda$, by Proposition 2.5, we have that  pd$_{\Lambda}\,E=max($pd$_{\Lambda}\,\Lambda, $pd$_{\Lambda}\,{\rm soc}\,{\rm Im}\,f)=2$. Consider the short exact sequence $0 \rightarrow E \rightarrow I_{0} \rightarrow  Y \rightarrow 0$. Since $I_{0}$ is projective, we have pd$_{\Lambda}\,Y=$pd$_{\Lambda}\,E+1=3$, a contradiction.

Thus pd$_{\Lambda}\, {\rm soc}\,{\rm Im}\,f\leq 1$ and $I_{1}$ is projective since $I_{1}$ is the injective envelope of Im\,$f$ and soc$\,I_{1}= $soc$\,{\rm Im}\,f$. Then the dominant dimension of $\Lambda$ is at most two and this completes our proof.

\end{proof}

We should mention that a finite dimensional $k$-algebra $\Lambda$ with gl.d\,$\Lambda\leq 2$ and $\mathcal{P}^{1}(\Lambda)=\{M\in { \rm mod}$-$\Lambda\ | \ {\rm pd}_{\Lambda}\,{\rm soc}\,M \leq 1\}$ is not necessarily an Auslander algebra. The following is a counter-example.

{\bf Example.} Let $\Lambda=kQ/\langle\beta\alpha\rangle$ be a finite dimensional $k$-algebra where $Q$ is the quiver $ 1 \xrightarrow{\alpha} 2 \xrightarrow{\beta} 3 \xleftarrow{\gamma} 4$. We have pd$_{\Lambda}\,S_{1}=2$ and pd$_{\Lambda}\,S_{i}\leq 1$ for $i=2,3,4$. Moreover, $S_{1}$ is the only indecomposable $\Lambda$-module whose projective dimension is two. Then gl.d\,$\Lambda=2$ and it is easy to know $\mathcal{P}^{1}(\Lambda)=\{M\in { \rm mod}$-$\Lambda\ | \ {\rm pd}_{\Lambda}\,{\rm soc}\,M \leq 1\}$. However, the indecomposable injective $\Lambda$-module $S_{4}$  is not projective and $\Lambda$ is not an Auslander algebra since its dominant dimension is zero.

\end{document}